\def\G{\Gamma}
\def\l{\lambda}
\def\m{\mu}
\def\f{\rightarrow}
\def\v{\vdash}
\def\<{\langle}
\def\>{\rangle}
\def\F{\displaystyle\frac}
\newtheorem{theorem}{Theorem}[section]
\newtheorem{lemma}{Lemma}[section]
\newtheorem{definition}{Definition}[section]
\newtheorem{notation}{Notation}[section]
\begin{document}

\begin{center}
{\LARGE\bf A short proof of the strong normalization of
the simply typed $\l\m$-calculus}\\[1cm]
\end{center}

\begin{center}
\bf
Ren\'e DAVID and Karim NOUR\\
\rm
LAMA - Equipe de Logique\\
 Universit\'e de Chamb\'ery\\
73376 Le Bourget du Lac\\
e-mail : \{david,nour\}@univ-savoie.fr\\[1cm]
\end{center}

\abstract{We give  an elementary and purely arithmetical proof of
the strong normalization of Parigot's simply typed
$\l\m$-calculus.}

\section{Introduction}

This paper gives an elementary and purely arithmetical  proof of
the strong normalization of the cut-elimination procedure for the
implicative propositional classical logic, i.e. the propositional
calculus with the connectives $\f$ and $\perp$.  As usual, $\perp$
codes the absurdity and the negation is defined by $\neg A = A \f
\perp$.

This proof is based on a proof  of the strong normalization of the
simply typed $\lambda$-calculus due to the first author (see
\cite{dav1}) which, itself, is a simplification of the one given
by R. Matthes in \cite{LoMa}. After this paper had been written we
were told by P.L. Curien and some others that this kind of
technique was already present in van Daalen (see \cite{vda}) and
J.J. Levy (see \cite{jjl}).

Since the proofs in the implicative propositional classical logic
can be coded by Parigot's  $\l\m$-terms and the cut elimination
corresponds to the  $\l\m$-reduction, the result can be seen as a
proof of the strong normalization of the simply typed
$\l\m$-calculus. The first proof of the the strong normalization
of the $\lambda\mu$-calculus for the types of Girard's system $F$
was done by Parigot in \cite{Par2} in two different ways : by
using reducibility candidates and by a CPS transformation to the
$\lambda$-calculus.

The technique we present here can also be used to prove the strong
normalization of the cut elimination procedure for the classical
natural deduction (i.e. where all the connectives, in particular
$\vee$, are present and permutative conversions are considered)
but more elaborate ideas are necessary. This result was proved
(see \cite{deG2}) by using a CPS transformation. We will give a
direct proof in a forthcoming paper.

\section{The typed system}

The $\l\m$-terms, which extend the $\l$-terms, are given by the
following grammar (where $x,y,...$ are variables):
$$
{\cal T} ::= x \mid \l x {\cal T} \mid ({\cal T} \; {\cal T})  \mid
\mu x {\cal T}
$$

\noindent The new constructor $\m$ corresponds to the classical
rule $\bot_c$ given below.

\medskip
\begin{center}

$\F{}{\G , x : A \v x : A} \, ax$ \hspace{1 cm} $\F{\G, x : A \v M
: B} {\G \v \l x M
: A \f B} \, \f_i$\\[0.5cm]

\medskip

$\F{\G_1 \v M : A \f B \quad \G_2 \v N : A} {\G_1,\G_2 \v (M \; N)
: B }\, \f_e$ \hspace{1 cm} $\F{\G , x : \neg A  \v M : \bot} {\G
\v \mu x M : A }  \, \bot_c$
 \end{center}

\medskip

\noindent The cut-elimination procedure corresponds to the
reduction rules given below.

\medskip
{\em A logical cut} appears when the introduction of the
connective $\f$ is immediately followed by its elimination. The
reduction rule is the usual $\beta$ reduction of the
$\lambda$-calculus:

$$ (\l x M \; N) \f M[x:=N] $$

{\em A classical cut } appears when the classical rule is
immediately followed by the elimination rule of $\f$.  The
reduction rule is :

$$ (\mu x M \; N) \f \mu y M[x:= \l z (y \; (z \; N))] $$

It corresponds to the following transformation on the proofs
(written in the natural deduction style):

\medskip
\begin{center}
$\F{\F{\begin{matrix}{[\neg(A \f B)] \cr {\cal D}_1 \cr \perp
\cr}\end{matrix}}{A \f B} \;\;\;
    \begin{matrix}{{\cal D}_2 \cr A \cr}\end{matrix}}{B}$
    \hspace{.5 cm} $\rightsquigarrow$ \hspace{.5 cm}
    $\F{\F{\F{\begin{matrix}{\; \cr [\neg B] \cr}\end{matrix}  \;\;\;  \F{\begin{matrix}{ \; \cr [A \f
        B] \cr}\end{matrix}  \;\;\; \begin{matrix}{{\cal D}_2 \cr A \cr}\end{matrix}}{B}}{\perp}}
{\begin{matrix}{\neg (A \f B) \cr {\cal D}_1 \cr \perp
\cr}\end{matrix}}}{B}$
\end{center}

\noindent This coding, though slightly different from the one in
\cite{Par2},  is essentially the same and the two systems are
obviously equivalent.

- Parigot uses two sets of variables: the $\l$-variables (for the
intuitionistic assumptions) and the $\mu$-variables (for classical
assumptions, i.e. the ones that are discharged by the absurdity
rule $\bot_c$). Moreover his typing judgements have several
conclusions.

- We use only one set of variables and sequents with only one
conclusion. Thus, we do not need the new constructor $[\alpha]$
and the corresponding notion of substitution. The drawback is that
the reduction introduces some ``administrative'' redexes. These
notations and reductions rules are the ones used in the
$\lambda_{\Delta}$ of Rehof and Sorensen (see \cite{ReSo}).

\section{Strong normalization}

 We first need some notations and lemmas.

\subsection{Lemmas for the un-typed calculus}

\begin{notation} Let $M$ be a $\l\m$-term.
\begin{enumerate}
\item  $M \f M'$ (resp. $M \f^* M'$)  means that $M$ reduces to $M'$
 by using one step (resp. some steps) of the reduction rules given above.
\item  $cxty(M)$ is the number of  symbols occurring in $M$.
\item $M$ is strongly normalizable (this is denoted by $M \in SN$) if there
is no infinite sequence of $\f$ reductions.
 If $M \in SN$,
 $\eta(M)$ is the length of the longest reduction of $M$.
 \item  $\overrightarrow{N}$ (resp. $\overrightarrow{\l\m}$) represents a sequence of
$\l\m$-terms (resp. of $\l$ or $\m$ abstractions). If
$\overrightarrow{N}$ is the sequence $ N_1...N_n$, $(M \;
\overrightarrow{N})$ denotes the $\l\m$-term $(M \; N_1...N_n)$.
\item In a proof by induction, $IH$ will denote the induction
hypothesis.
\end{enumerate}
 \end{notation}

\begin{lemma}\label{form}
Every $\l\m$-term $M$ can be written as $\overrightarrow{\l
\mu}(R\; \overrightarrow{O})$ where  $R$ is either a redex (called
the head-redex of $M$) or a variable (in this case, $M$ is in head
normal form).
\end{lemma}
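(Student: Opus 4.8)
The statement is a pure syntactic decomposition lemma, so I would prove it by structural induction on $M$, following the grammar ${\cal T} ::= x \mid \l x {\cal T} \mid ({\cal T}\;{\cal T}) \mid \mu x {\cal T}$. The base case $M = x$ is immediate: take the sequence $\overrightarrow{\l\mu}$ and $\overrightarrow{O}$ both empty and $R = x$, so $M$ is in head normal form. For $M = \l x N$ or $M = \mu x N$, apply the $IH$ to $N$ to write $N = \overrightarrow{\l\mu}(R\;\overrightarrow{O})$; then $M = \l x\,\overrightarrow{\l\mu}(R\;\overrightarrow{O})$ (resp. with $\mu$), which is again of the required form, prepending the new abstraction to the sequence $\overrightarrow{\l\mu}$, and $R$ remains the head-redex (or $M$ stays in head normal form).

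The only case requiring thought is the application $M = (P\;Q)$. First I would handle the subcase where $P$ itself is an abstraction, $P = \l x P'$ or $P = \mu x P'$: then $(P\;Q)$ is itself a redex (a logical cut or a classical cut, respectively), so I take $R = M$ as the head-redex, with empty $\overrightarrow{\l\mu}$ and empty $\overrightarrow{O}$. Otherwise, by the $IH$, $P = \overrightarrow{\l\mu}(R\;\overrightarrow{O})$; I would then argue that since $P$ is not itself an abstraction, the leading sequence $\overrightarrow{\l\mu}$ for $P$ must in fact be empty — i.e. $P = (R\;\overrightarrow{O})$ — because a term headed by a $\l$ or $\mu$ in head position is syntactically an abstraction (or more carefully: one splits on whether $P$'s head is an abstraction applied to nothing, which is the abstraction case already treated, versus $P$ being a genuine application). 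Hence $M = (P\;Q) = (R\;\overrightarrow{O}\,Q) = (R\;\overrightarrow{O'})$ with $\overrightarrow{O'} = \overrightarrow{O}Q$, which is of the required form with empty leading abstraction sequence, and $R$ is still the head-redex.

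The main obstacle, such as it is, is being careful about the case analysis on the application: one must make sure that when $P$ is written via the $IH$ as $\overrightarrow{\l\mu}(R\;\overrightarrow{O})$ with a nonempty abstraction prefix, this situation is exactly the ``$P$ is an abstraction'' case, so that it does not interfere with the ``$P$ is an application'' case where the prefix is empty. A clean way to organize this is to do the induction on $cxty(M)$ rather than raw structure, or equivalently to phrase the application case as: either $P = \l x P'$ or $P = \mu x P'$ (then $M$ is a head-redex), or $P = (P_1\;P_2)$ (then apply $IH$ to $P$, whose decomposition has empty prefix, and append $Q$), or $P$ is a variable (then $M = (P\;Q)$ is already $(R\;\overrightarrow{O})$ with $R = P$ a variable, so $M$ is in head normal form). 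All subcases then fall out by routine inspection, and the uniqueness/well-definedness of the head-redex is not asserted here so no further work is needed.
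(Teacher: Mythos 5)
Your proof is correct and follows the same route as the paper, which simply says ``by induction on $cxty(M)$'' and leaves the case analysis implicit; your structural induction (with the clean reorganization of the application case into the subcases $P$ an abstraction, $P$ a variable, $P$ an application with necessarily empty prefix) is exactly the intended argument, spelled out.
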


\begin{proof}  By induction on $cxty(M)$.
\end{proof}

\begin{definition} Let $M$ be a $\l\m$-term.
\begin{enumerate}
\item $hred(M)$ is the term obtained from $M$ by reducing
  its head-redex, if any.
\item $arg(M)$ is the set of terms  defined by:
\begin{itemize}
\item $arg(\overrightarrow{\l \mu}(x\; O_1 ... O_n))
  = \{O_1,...,O_n\}$.
\item $arg(\overrightarrow{\l \mu}(\l x P \; Q\; O_1 ... O_n))=arg(\overrightarrow{\l \mu}
(\mu x P \; Q\; O_1 ... O_n))
  = \{P,Q,O_1,...,O_n\}$.
\end{itemize}
\end{enumerate}
\end{definition}

\begin{lemma}\label{arg}
Let $M,N$ be $\l\m$-terms. Then, $arg(M[x:=N])\subset arg(N) \cup
\{N\} \cup \{Q[x:=N]\; / \; Q \in arg(M)\}$.
\end{lemma}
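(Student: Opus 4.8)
The plan is to argue by structural induction on $M$, following the grammar of $\l\m$-terms, and to track how $arg$ behaves under substitution in each case. The key observation is that $arg(M)$ depends only on the head structure of $M$ as exhibited by Lemma~\ref{form}: it is either the list of arguments applied to a head variable, or the list consisting of the body and argument of the head redex together with the remaining arguments. So I would first determine the head normal form shape of $M$, substitute, and then compare the head shape of $M[x:=N]$ with that of $M$.

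First I would write $M = \overrightarrow{\l\m}(R\;\overrightarrow{O})$ as in Lemma~\ref{form} and split on whether $R$ is a variable or a redex. If $R$ is a variable $z \neq x$, then $M[x:=N] = \overrightarrow{\l\m}(z\;\overrightarrow{O}[x:=N])$ has head variable $z$, so $arg(M[x:=N]) = \{O_i[x:=N]\}$, and since each $O_i \in arg(M)$ this is contained in $\{Q[x:=N] \; / \; Q \in arg(M)\}$. If $R = (y\;O_1\ldots O_n)$ with the head variable equal to $x$ itself, then $M[x:=N] = \overrightarrow{\l\m}(N\;O_1[x:=N]\ldots O_n[x:=N])$; here the head structure is governed by $N$, and one unfolds $N = \overrightarrow{\l\m}(R'\;\overrightarrow{O'})$ to see that $arg(M[x:=N])$ consists of arguments coming from $arg(N)$, possibly the term $N$ itself (this is where the extra $\{N\}$ is needed, e.g. when $N$ is a variable and the new head redex is formed using a $\l$ or $\m$ from $N$'s prefix), and the substituted $O_i[x:=N]$. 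Finally, if $R$ is a redex $(\l y P\;Q\;O_1\ldots O_n)$ or $(\m y P\;Q\;O_1\ldots O_n)$ whose head is unaffected by the substitution, then $M[x:=N]$ still has a redex of the same shape at the head, and $arg(M[x:=N]) = \{P[x:=N], Q[x:=N], O_i[x:=N]\}$, each piece being $Q'[x:=N]$ for the corresponding $Q' \in arg(M)$.

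The main obstacle I expect is the bookkeeping in the case where $x$ is the head variable of $M$ and a new head redex (or new head variable from deeper in $N$) appears after substitution: one must carefully identify which elements of $arg(M[x:=N])$ come genuinely from inside $N$ (landing in $arg(N)$), which is $N$ itself, and which are the $O_i[x:=N]$. I would handle this by a sub-induction or a direct unfolding of the $\overrightarrow{\l\m}$-prefix of $N$, checking the two clauses of the definition of $arg$ against the possible shapes of $(N\;O_1[x:=N]\ldots O_n[x:=N])$. The remaining verifications are routine once the head shapes are lined up, so no lengthy computation is required.
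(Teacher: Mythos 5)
Your overall strategy---decompose $M$ as $\overrightarrow{\l\m}(R\;\overrightarrow{O})$ via Lemma~\ref{form} and compare the head shape of $M[x:=N]$ with that of $M$---is the natural one (the paper gives no details at all here, its proof being ``Immediate''), and your cases ``head variable $z\neq x$'' and ``head redex untouched by the substitution'' are correct. The gap is exactly in the case you defer to ``a sub-induction or a direct unfolding'': head variable $x$ with at least one argument. Take $M=\overrightarrow{\l\m}(x\;O_1\ldots O_n)$, $n\geq 1$, and $N=\l y N_1$ (or $\m y N_1$). Then $M[x:=N]=\overrightarrow{\l\m}(\l y N_1\;O_1[x:=N]\ldots O_n[x:=N])$ has head redex $(\l y N_1\;O_1[x:=N])$, so by the second clause of the definition $arg(M[x:=N])$ contains the \emph{body} $N_1$ of $N$. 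This element is not of the form $Q[x:=N]$ with $Q\in arg(M)$, it is not $N$, and in general it is not in $arg(N)$ either: for $M=(x\;w)$ and $N=\l y\,(z\;a)$ one gets $arg(M[x:=N])=\{(z\;a),\,w\}$ while $arg(N)=\{a\}$. So the bookkeeping you call routine is precisely where the statement has content, and your parenthetical account of where $\{N\}$ is needed (``when $N$ is a variable and the new head redex is formed using a $\l$ or $\m$ from $N$'s prefix'') is incoherent---a variable has no prefix---and does not address this element at all.

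What makes everything work, and what you must say explicitly, is that the offending element $N_1$ is a subterm of $N$. The inclusion one can actually verify by your case analysis, and the only fact Lemma~\ref{principal} uses, is: every element of $arg(M[x:=N])$ is either a subterm of $N$ (hence in $SN$ whenever $N$ is) or of the form $Q[x:=N]$ with $Q\in arg(M)$. Either prove the lemma in that form, or single out the sub-case $N=\l y N_1$, $N=\m y N_1$ and record that the extra element is $N_1$, a subterm of $N$; as written, your argument does not establish the stated inclusion in this sub-case, which is the only non-trivial one.
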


\begin{proof} Immediate.
\end{proof}

\begin{lemma}\label{arg+head}
Let $M$ be a $\l\m$-term. Then, $M \in SN$ iff $arg(M) \subset SN$
and $hred(M)\in SN$.
\end{lemma}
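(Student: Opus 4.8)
The plan is to prove both implications of the equivalence, the forward one being immediate and the converse carrying all the content. If $M \in SN$ then $hred(M) \in SN$, since either $hred(M) = M$ or $M \f hred(M)$; and $arg(M) \subset SN$ because every element of $arg(M)$ is a subterm of $M$, so an infinite reduction of it would lift to one of $M$. For the converse, assume $arg(M) \subset SN$ and $hred(M) \in SN$. First I would dispose of the trivial case: by Lemma~\ref{form}, write $M = \overrightarrow{\l\mu}(R\;\overrightarrow{O})$; if $R$ is a variable then $hred(M) = M$, so the hypothesis gives $M \in SN$ at once. So I may assume $R$ is the head-redex, say $R = (\l x P\;Q)$ (the case $R = (\mu x P\;Q)$ is handled identically, with $P[x:=Q]$ replaced everywhere by $P[x := \l z(y\;(z\;Q))]$). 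Then $arg(M) = \{P,Q,O_1,\dots,O_n\}$ and $hred(M) = \overrightarrow{\l\mu}(P[x:=Q]\;\overrightarrow{O})$.

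The core of the argument is an induction on the pair $(\eta(hred(M)),\ \sum_{A \in arg(M)}\eta(A))$, ordered lexicographically (both components are well defined by the hypotheses), whose aim is to show that every one-step reduct $M'$ of $M$ lies in $SN$; that suffices, since a term all of whose immediate reducts are in $SN$ is itself in $SN$. The redexes of $M$ are exactly: the head-redex $R$; the redexes inside $P$; those inside $Q$; and those inside some $O_i$ — no new redex sits between $R$ and the $O_i$ or under the leading abstractions, because $R$ is an application. If $M'$ is obtained by firing the head-redex, then $M' = hred(M) \in SN$ by hypothesis. Otherwise the fired redex lies inside one component $A \in arg(M)$, replacing it by $A'$ with $A \f A'$; then $M'$ is again of the form $\overrightarrow{\l\mu}((\l x P^*\;Q^*)\;\overrightarrow{O^*})$, with $arg(M') = (arg(M)\setminus\{A\})\cup\{A'\}$, hence $arg(M') \subset SN$ (as $A \in SN$), and $\sum_{B\in arg(M')}\eta(B) < \sum_{A\in arg(M)}\eta(A)$.

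To apply the induction hypothesis to $M'$ I still need $hred(M') \in SN$ and $\eta(hred(M')) \le \eta(hred(M))$, and here is the one delicate point: I claim $hred(M) \f^* hred(M')$. Indeed, if $A = O_i$ this is a single step inside $O_i$; if $A = P$ with $P \f P'$, then $P[x:=Q] \f P'[x:=Q]$ in one step, since the reduced redex is not destroyed by the substitution $[x:=Q]$ and substitution commutes with reduction; if $A = Q$ with $Q \f Q'$, then $P[x:=Q] \f^* P[x:=Q']$, with one step per free occurrence of $x$ in $P$. This last possibility — when $x \notin FV(P)$, so that $hred(M)$ is literally unchanged — is precisely why the second component of the measure is needed, and is the main obstacle to a one-parameter induction. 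Since $hred(M) \in SN$, the relation $hred(M) \f^* hred(M')$ gives $hred(M') \in SN$ and $\eta(hred(M')) \le \eta(hred(M))$, so the measure of $M'$ is strictly smaller and the induction hypothesis applies, yielding $M' \in SN$. For $R = (\mu x P\;Q)$ the only difference is that a step inside $Q$ propagates through $\l z(y\;(z\;Q))$ into the occurrences of $x$ in $P$, which is the same situation, again possibly leaving $hred(M)$ unchanged.
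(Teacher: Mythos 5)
Your proof is correct, and it hinges on the same key fact as the paper's proof, namely that a reduction performed inside $P$, $Q$ or one of the $O_i$ projects through the head contraction, so that $hred(M)\f^* hred(M')$; but you organize the argument differently. The paper reasons directly on the shape of a hypothetical infinite reduction of $M$: since $arg(M)\subset SN$, such a reduction must eventually fire the head redex, hence has the form $M\f^*\overrightarrow{\l\m}(\l xP_1\;Q_1\;\overrightarrow{O_1})\f\overrightarrow{\l\m}(P_1[x:=Q_1]\;\overrightarrow{O_1})\f^*\cdots$, and the projection fact $(P[x:=Q]\;\overrightarrow{O})\f^*(P_1[x:=Q_1]\;\overrightarrow{O_1})$ then turns its tail into an infinite reduction of $hred(M)$, a contradiction. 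You instead set up a well-founded induction on the lexicographic measure $(\eta(hred(M)),\,\eta(P)+\eta(Q)+\sum_i\eta(O_i))$ and analyse one-step reducts; this is more bureaucratic but also more finitary, and it makes explicit the vacuous case $x\notin FV(P)$, which the paper never needs to isolate because it only uses $\f^*$. One small repair to your write-up: the second component of the measure must be the sum of $\eta$ over the argument \emph{positions} (a multiset), not over the set $arg(M)$ as you wrote it; if two positions carry the same term, reducing inside one of them enlarges the set and your sum need not decrease, while the first component can stay unchanged when $x\notin FV(P)$. With the positional sum the measure strictly decreases exactly as you claim, and the rest of your argument goes through.
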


\begin{proof}  $\Rightarrow$ is immediate. $\Leftarrow$ : If $M = \overrightarrow{\l \mu}(x\;
  \overrightarrow{O})$ the result is  trivial.

  - If $M =  \overrightarrow{\l \mu}(R\;
  \overrightarrow{O})$ where $R=\lambda xP\;Q$: since $arg(M) \subset SN$, an infinite
reduction of $M$ must look like:
 $M\to^*  \overrightarrow{\l \mu}(\lambda xP_{1}\;Q_{1}$\
$\overrightarrow{O_{1}}\;)\rightarrow  \overrightarrow{\l
\mu}(P_{1}[x:=Q_{1}]\; \overrightarrow{O_{1}})\to^*$ ... . The
result immediately follows from the fact that
 $(P[x:=Q]\;\overrightarrow{O} )\to^*
(P_{1}[x:=Q_{1}]\;\overrightarrow{O_{1}})$.

- If $M =  \overrightarrow{\l \mu}(R\;
  \overrightarrow{O})$ where $R=\mu xP\;Q$: the proof is similar.\qed
\end{proof}

\begin{lemma}\label{mu}
Let $M \in SN$ be $\l\m$-term. Then  $(M \, y)\in SN$.
\end{lemma}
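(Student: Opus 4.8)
The plan is to prove the lemma by induction on the pair $\langle\eta(M),cxty(M)\rangle$ ordered lexicographically. Using Lemma~\ref{form}, write $M=\overrightarrow{\l\mu}(R\;\overrightarrow{O})$ and try to apply Lemma~\ref{arg+head} to $(M\;y)$: one needs $arg((M\;y))\subset SN$ and $hred((M\;y))\in SN$. The first is immediate, since $arg((M\;y))$ is $arg(M)\cup\{y\}$ when $M$ is not an abstraction and $\{M_1,y\}$ when $M=\l x M_1$ or $M=\mu x M_1$, hence $\subset SN$ because $M\in SN$. So the whole task is $hred((M\;y))\in SN$, and I split on the shape of $M$.

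If $M$ does not start with an abstraction, then either $M=(v\;\overrightarrow{O})$ with $v$ a variable and $(M\;y)$ is already in head normal form (nothing to do), or $M=(R\;\overrightarrow{O})$ with $R$ a redex, so $hred((M\;y))=(hred(M)\;y)$ with $\eta(hred(M))<\eta(M)$ and the induction hypothesis applies. If $M=\l x M_1$ (with $x\neq y$), then $hred((M\;y))=M_1[x:=y]$; a variable-for-variable substitution creates no redex, so $\eta(M_1[x:=y])\le\eta(M_1)$ and $M_1[x:=y]\in SN$ follows from $M_1\in SN$. The remaining case $M=\mu x M_1$ is the heart of the proof: writing $\s=\l z(y'\;(z\;y))$ with $y',z$ fresh, $hred((M\;y))=\mu y'\,M_1[x:=\s]$, which is in $SN$ iff $M_1[x:=\s]\in SN$.

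So everything reduces to the \emph{substitution claim}: $P\in SN$ implies $P[x:=\s]\in SN$, with $\s=\l z(y'\;(z\;y))$ for any variable $y$ and fresh $y',z$. I would prove this jointly with the lemma by the same lexicographic induction — legitimately, since in the case $M=\mu x M_1$ the term $M_1$ is a proper subterm of $M$ with $\eta(M_1)=\eta(M)$, so the claim is available for $M_1$. Writing $P=\overrightarrow{\l\mu}(R\;\overrightarrow{O})$ and arguing on $R$: if $R$ is a variable $\neq x$, then $P[x:=\s]$ is still in head normal form and one only checks, by the induction hypothesis, that its arguments (substitution instances of proper subterms of $P$) are in $SN$; if $R$ is a redex, then $hred(P[x:=\s])=(hred(P))[x:=\s]$ (the substitutions in play commute because the binders created are fresh) and $\eta(hred(P))<\eta(P)$, so the induction hypothesis plus Lemma~\ref{arg+head} conclude; if $R=x$ with $\overrightarrow{O}$ empty, $P[x:=\s]$ is a block of abstractions over a normal form and is in $SN$. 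The one genuinely delicate sub-case is $R=x$ with $\overrightarrow{O}=O_1\cdots O_n$, $n\ge1$: the substitution creates at the head the redex $(\s\;O_1[x:=\s])$, and contracting it leaves the head normal form $\overrightarrow{\l\mu}\big(y'\;(O_1[x:=\s]\;y)\;O_2[x:=\s]\cdots O_n[x:=\s]\big)$; all of its arguments but the first are instances of proper subterms of $P$ and are fine by the induction hypothesis, but the first is $(O_1[x:=\s]\;y)=(O_1\;y)[x:=\s]$, and one cannot simply invoke the lemma or the claim on $O_1[x:=\s]$ because the classical substitution may have enlarged it and slowed its normalization. This is the step I expect to be the real obstacle; defusing it — especially when $O_1$ itself begins with a classical abstraction, so that successive classical substitutions compound — is what forces the careful choice of well-founded measure, exploiting that $\s$ is a normal form with a variable head (so it never diverges and introduces at most one new redex per occurrence of $x$), exactly in the spirit of the handling of $\beta$-substitution in the $\l$-calculus proof of~\cite{dav1} on which this argument is modelled. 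Once the substitution claim is in place, the $\mu$ case of the lemma, and with it the lemma, follows by Lemma~\ref{arg+head}.
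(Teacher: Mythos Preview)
Your decomposition into claim~(A), $(M\;y)\in SN$, and claim~(B), $P[x:=\s]\in SN$, is on the right track, but the obstacle you flag in the sub-case $R=x$, $\overrightarrow{O}\neq\emptyset$ is a genuine gap that your sketch does not close. You need $(O_1[x:=\s]\;y)\in SN$, and neither ordering of the joint IH helps: applying IH-(B) to $O_1$ and then (A) to $O_1[x:=\s]$ fails because $\eta(O_1[x:=\s])$ may exceed $\eta(P)$ (already $P=(x\;(x\;a))$ with $a$ a variable gives $\eta(P)=0$ but $\eta(O_1[x:=\s])=1$); rewriting as $(O_1\;y)[x:=\s]$ and applying IH-(A) to $O_1$ and then (B) to $(O_1\;y)$ fails symmetrically, since $\eta((O_1\;y))$ may exceed $\eta(P)$ (take $P=(x\;\l v\,v)$). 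No refinement of the measure rescues this while (A) and (B) are kept separate: the two operations --- applying $y$ and substituting $\s$ --- must be allowed to alternate arbitrarily often, which is exactly what happens when $O_1$ itself begins with $\m$ and the classical substitutions compound.

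The paper's fix is not a cleverer measure but a strengthened statement that fuses (A) and (B): one proves directly, by induction on $(\eta(M),cxty(M))$, that for every $M\in SN$ and every simultaneous substitution $\sigma$ of the special shape
\[
\sigma=[\,x_1:=\l u\,(x_1\;(u\;y)),\ \dots,\ x_n:=\l u\,(x_n\;(u\;y))\,],
\]
one has $(M[\sigma]\;y)\in SN$. With this formulation the $\m x P$ case is trivial --- one simply enlarges $\sigma$ to $\sigma'=\sigma\cup[x:=\l u\,(x\;(u\;y))]$ and invokes the IH on the strict subterm $P$ --- and, crucially, in the case $M=(x\;O_1\;\overrightarrow{O})$ with $x\in\mathrm{dom}(\sigma)$ the needed fact $(O_1[\sigma]\;y)\in SN$ is \emph{literally} the IH applied to the strict subterm $O_1$. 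Packaging the application-by-$y$ and the classical substitution into a single inductive hypothesis is precisely the missing idea; once you have it, your case analysis goes through without further difficulty.
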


\begin{proof} We prove by induction on $(\eta(M),cxty(M))$ that, if $M \in SN$, then
$(M[\sigma] \, y)\in SN$ where $\sigma$ is a substitution of the
form : $[x_1:=\l u
  (x_1 \; (u \; y)),...,x_n:=\l u (x_n \; (u \; y))]$. It follows immediately from the $IH$ that,
   if $N$ is a strict sub-term of $M$, then $N[\sigma] \in SN$ and thus $arg((M[\sigma] \, y)) \subset SN$.
   By lemma \ref{arg+head}, it is thus enough to prove that $N=hred((M[\sigma]
\, y))\in SN$. In each case the result follows easily from the
$IH$:
\begin{itemize}
\item If $M = (x \; \overrightarrow{O})$ and $\sigma(x) = x$: then
  $(M[\sigma] \, y) = (x \; \overrightarrow{O[\sigma]} \; y)$.
\item If $M = (x \; O_1\overrightarrow{O})$ and $\sigma(x) = \l u (x \; (u \;
y)$: then
   $N = (x \; (O_1[\sigma] \; y)
  \overrightarrow{O}[\sigma]\; y)$.
\item If $M = (\l x P \; Q\; \overrightarrow{O})$: then $N
  = (M'[\sigma] \, y)$ where $M'=hred(M)$ and thus $\eta(M') < \eta(M)$.
\item If $M = (\mu x P \; Q\; \overrightarrow{O})$: similar.
\item If $M = \l x P$: then   $N =P[\sigma][x:=y]$.
\item If $M = \mu x P$:  then   $N=\mu x P[\sigma']$
  where $\sigma' = \sigma \cup [x:=\l u (x \; (u \; y))]$. \qed
\end{itemize}
\end{proof}

\subsection{Proof of the strong normalization of the typed
calculus}
 The following result is straightforward.

\begin{lemma}
If $\G \v M : A$ and $M \f^* N$ then $\G \v N : A$.
\end{lemma}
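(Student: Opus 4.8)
The plan is to reduce to the one-step case: since $M \f^* N$ is a finite iteration of single $\f$-steps, it suffices to show that $\G \v M : A$ and $M \f N$ imply $\G \v N : A$, and then iterate. I would prove this one-step statement by induction on the derivation of $\G \v M : A$ (equivalently, on $cxty(M)$), splitting on whether the contracted redex is the whole of $M$ or lies properly inside a subterm. In the latter case the last rule applied is one of $\f_i$, $\f_e$, $\bot_c$ and the reduction takes place in one of the immediate subterms; applying the $IH$ to the relevant premise and re-applying the same rule (using, for $\f_e$, that the context of the untouched subterm is unchanged) gives the conclusion.

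The two remaining cases are the contractions of a head redex, and both rest on a substitution lemma which I would establish first: \emph{if $\G, x:A \v P : B$ and $\Delta \v Q : A$ then $\G,\Delta \v P[x:=Q] : B$}, proved by an easy induction on the derivation of $\G, x:A \v P : B$ (with weakening used in the axiom case). For the logical cut $(\l x P \; Q) \f P[x:=Q]$: from $\G \v (\l x P\;Q):B$ one gets $\G_1 \v \l x P : A \f B$ and $\G_2 \v Q : A$ with $\G=\G_1,\G_2$, hence $\G_1, x:A \v P : B$; the substitution lemma then yields $\G_1,\G_2 \v P[x:=Q] : B$, i.e. $\G \v P[x:=Q] : B$.

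For the classical cut $(\m x P \; Q) \f \m y\, P[x := \l z (y\;(z\;Q))]$: from $\G \v (\m x P\;Q):B$ one gets $\G_1 \v \m x P : A \f B$, $\G_2 \v Q : A$, $\G = \G_1,\G_2$, and hence $\G_1, x : \neg(A\f B) \v P : \bot$. In the context $\G_2, y : \neg B, z : A \f B$ one derives $(z\;Q) : B$, then $(y\;(z\;Q)) : \bot$, so $\l z (y\;(z\;Q)) : (A\f B)\f\bot = \neg(A \f B)$; the substitution lemma gives $\G_1,\G_2, y : \neg B \v P[x:=\l z(y\;(z\;Q))] : \bot$, and the rule $\bot_c$ concludes $\G \v \m y\, P[x:=\l z(y\;(z\;Q))] : B$.

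I expect the only mildly delicate point to be the bookkeeping of the (multiplicatively split) contexts together with the implicit appeals to weakening — in particular choosing the fresh $\m$-variable $y$ and the locally bound $z$ genuinely fresh, and checking that the pieces $\G_1$ and $\G_2$ recombine correctly in the $\m$-case. None of this goes beyond routine verification, which is why the statement is announced as straightforward.
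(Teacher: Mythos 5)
Your proof is correct: the paper states this lemma without proof (calling it straightforward), and your argument — reduction to the one-step case, induction on the typing derivation, a substitution lemma, and the explicit check that $\l z (y \; (z \; Q))$ gets type $\neg(A \f B)$ in the classical-cut case so that $\bot_c$ re-applies — is exactly the routine verification the paper implicitly relies on. In particular your treatment of the $\m$-redex, the only case not already standard from the simply typed $\l$-calculus, is accurate.
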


\begin{lemma}\label{principal}
\label{b}Let  $M \in SN$ be a $\l\m$-term and $\sigma$ be a
 substitution. Assume that the substituted  variables all have  {\em the same type} and,
  for all $x$, $\sigma(x) \in SN$. Then $M[\sigma] \in SN$.
\end{lemma}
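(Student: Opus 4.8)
The plan is to prove Lemma \ref{principal} by a lexicographic induction, the kind already used for Lemma \ref{mu}. The natural induction parameters are a triple: first the type $A$ common to all the substituted variables (ordered by the usual subterm/complexity ordering on types, or equivalently by $cxty(A)$), then the pair $(\eta(M), cxty(M))$ measuring how badly $M$ can still reduce and how big it is. Since all reductions contracting redexes created by the substitution at a variable $x$ of type $A$ involve subterms of type strictly smaller than $A$, the outermost measure will decrease exactly when we hit such a ``created'' redex, while the $\l\m$-structural redexes of $M$ itself are controlled by $\eta(M)$.

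First I would apply Lemma \ref{arg+head}: to show $M[\sigma]\in SN$ it suffices to show $arg(M[\sigma])\subset SN$ and $hred(M[\sigma])\in SN$. For the arguments, I use Lemma \ref{arg}, which gives $arg(M[\sigma])\subset arg(N_x)\cup\{N_x\}\cup\{Q[\sigma]\;/\;Q\in arg(M)\}$ (suitably read for a simultaneous substitution): the first two pieces are in $SN$ because each $\sigma(x)\in SN$ and $SN$ is closed under $arg$; the last piece is handled by the $IH$ applied to each $Q\in arg(M)$, since $Q$ is a strict subterm of $M$ so $cxty(Q)<cxty(M)$ and $\eta(Q)\le\eta(M)$, while $Q$ carries a substitution with the same property (all substituted variables of type $A$, all values in $SN$). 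So the real work is $hred(M[\sigma])\in SN$, and I would split on the head shape of $M$ given by Lemma \ref{form}.

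The routine cases are when the head of $M$ is itself a $\l$- or $\mu$-redex: then $hred(M[\sigma]) = hred(M)[\sigma]$ (substitution commutes with reducing a redex already present in $M$), and since $\eta(hred(M))<\eta(M)$ the $IH$ (same type $A$, smaller $\eta$) finishes it. If the head of $M$ is a variable $x$ with $\sigma(x)=x$, then $hred(M[\sigma])$ is just $M[\sigma]$ in head normal form and we are already done by the $arg$ analysis. The one genuinely interesting case — the main obstacle — is when $M=\overrightarrow{\l\mu}(x\;O_1\overrightarrow{O})$ with $\sigma(x)=P$ where $P$ is some term of type $A$ in $SN$: then $hred(M[\sigma])$ contracts the redex $(P\;O_1[\sigma])$, creating new redexes of type strictly smaller than $A$. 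Here I would argue that the resulting term is obtained by substituting into a term whose relevant subterms now have smaller type, so the outermost induction parameter (the type) strictly decreases; more precisely one wants something like: $O_1[\sigma]$ has type a strict subformula of $A$, and reducing $P$ against it and then the tail $\overrightarrow{O}$ produces a term of the form $M'[\sigma']$ where $M'\in SN$ and $\sigma'$ substitutes variables of strictly smaller type. Making this bookkeeping precise — choosing the measures so that this ``head variable gets replaced by an abstraction'' case is strictly smaller, while still having enough room for the subterm recursion — is the delicate point; I expect the authors handle it by a careful choice of induction order on $(A,\eta(M),cxty(M))$ exactly as in the proof of Lemma \ref{mu}, possibly first proving an auxiliary statement of the shape ``if $P, O_1\overrightarrow{O}\in SN$ and $P$ has type $A\f B$ then $(P\;O_1\overrightarrow{O})\in SN$'' by induction on the type and on $\eta(P)+\eta(O_1)+\dots$, which is then fed back in.
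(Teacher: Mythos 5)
Your skeleton---a lexicographic induction led by the common type, the reduction to $arg(M[\sigma])\subset SN$ and $hred(M[\sigma])\in SN$ via Lemmas \ref{arg} and \ref{arg+head}, and the routine cases where the head redex of $M[\sigma]$ already comes from $M$---matches the paper. But the case you defer as ``the delicate point'' is precisely where the proof lives, and what you propose for it does not close. Write $A=B\f C$ for the type of the head variable $x$ in $M=(x\;P\;\overrightarrow{O})$ and suppose $\sigma(x)=\mu y N_1$. The contracted classical cut $(\mu y N_1\;Q)\f \mu y'\,N_1[y:=\l z\,(y'\;(z\;Q))]$ substitutes for $y$ a term of type $\neg A$, which is \emph{not} smaller than $A$; so an induction whose leading component is the type of the substituted variables gives you nothing for this step, and your suggested auxiliary statement (``$P,\overrightarrow{O}\in SN$ and $P:A\f B$ imply $(P\;\overrightarrow{O})\in SN$, by induction on the type and the $\eta$'s'') runs into exactly the same wall---it is essentially the lemma to be proved. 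The paper's way out is the purely \emph{untyped} Lemma \ref{mu}: for a fresh variable $z$, $(\mu y N_1\;z)\in SN$; then $(\mu y N_1\;P[\sigma])=(\mu y N_1\;z)[z:=P[\sigma]]\in SN$ by the induction hypothesis, because $z$ has the strictly smaller type $B$, and finally $M[\sigma]=(u\;\overrightarrow{O[\sigma]})[u:=(\mu y N_1\;P[\sigma])]$ closes the case with another strictly smaller type $C$. You never bring Lemma \ref{mu} into play at this point, so the $\mu$ case---the only genuinely classical one, and the reason the paper proved Lemma \ref{mu} at all---is missing.

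A second, smaller gap: your measure $(A,\eta(M),cxty(M))$ cannot handle the sub-case where $\sigma(x)$ sits in head position but is not in head normal form. There $hred(M[\sigma])=M[\sigma']$ with $\sigma'(x)=hred(\sigma(x))$, and none of your three components decreases; your claim that ``$hred(M[\sigma])$ contracts the redex $(P\;O_1[\sigma])$'' silently assumes that $P$ starts with an abstraction. The paper adds a fourth component $\eta(\sigma)$ (the total length of reductions of the substituted terms, counted with multiplicity), together with the normalisation ``assume without loss of generality that $x$ occurs only once in $M$'', to make this step decrease. Granting Lemma \ref{mu} for the $\mu$ case and this extra component, the $\l$ case you sketched (two successive uses of the induction hypothesis, at the domain type for $N_1[y:=P[\sigma]]$ and at the codomain type for $(z\;\overrightarrow{O[\sigma]})[z:=N_1[y:=P[\sigma]]]$) is indeed what the paper does.
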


\begin{proof}
This is done by  induction on
$(lgt(\sigma),\eta(M),cxty(M),\eta(\sigma))$ where $lgt(\sigma)$
is the number of connectives in the type of the substituted
variables and $\eta(\sigma)$ is the sum of the $\eta(N)$ for the
$N$ that are actually substituted, i.e. for example if
$\sigma=[x:=N]$ and $x$ occurs $n$ times in $M$, then
$\eta(\sigma)=n.\eta(N)$.
 The cases $M=\lambda xP$,
$M=\mu xP$ and $M=(y\; \overrightarrow{P})$ for $y\neq x$ are
trivial. Otherwise, by the $IH$ and lemma \ref{arg},
$arg(M[\sigma]) \subset SN$. By lemma \ref{arg+head} it is thus
enough to show that $hred(M[\sigma]) \in SN$:

\begin{itemize}

\item If $M=(\lambda y P\;Q\;\overrightarrow{O})$ :
$hred(M[\sigma])=hred(M)[\sigma]$  and the result follows from the
$IH$ since $\eta(hred(M))<\eta(M)$.

\item $M=(\mu y P\;Q\;\overrightarrow{O})$ : similar.

\item $M=(x\;P\;\overrightarrow{O})$ : by our
definition of $\eta(\sigma)$, we may assume, without loss of
generality, that $x$ occurs only once in $M$. Let $N=\sigma(x)$.

\begin{itemize}

\item If $N$ is not in head normal form,
$hred(M[\sigma])=M[\sigma']$ where $\sigma'(y)=\sigma(y)$ for $y
\neq x$ and   $\sigma'(x)=hred(N)$. The result follows from the
$IH$ since $\eta(\sigma') < \eta(\sigma)$.

\item If $N=(y \; \overrightarrow{N_1})$, the result is trivial.

\item If $N = \l y N_{1}$ then
$hred(M[\sigma])=(N_{1}[y:=P[\sigma]]\;\overrightarrow{O[\sigma]})$.
By the $IH$, since $lgt(P[\sigma])<lgt(\sigma)$,
$N_{1}[y:=P[\sigma]] \in SN$ and thus, by the $IH$,
$hred(M[\sigma])=(z\;\overrightarrow{O[\sigma]})\;[z:=N_{1}[y:=P[\sigma]]]
\in SN$ since $lgt(N_{1})<lgt(\sigma)$.

\item If $N = \mu y N_1$ then $M[\sigma]=(\mu y N_1 \,P[\sigma] \, \overrightarrow{O[\sigma]})$.  Let $M_1 = (\mu y N_1 \, z)$ where $z$ is a fresh variable. By
lemma \ref{mu}, $M_1 \in SN$. Since $lgt(P[\sigma]) <
lgt(\sigma)$, by the $IH$, $(\mu y N_1 \, P[\sigma]) = M_1[z :=
P[\sigma]]\in SN$. But $M[\sigma]=M_2[u:= (\mu y N_1 \,
P[\sigma])]$ where $M_2=(u \,\overrightarrow{O[\sigma]})$ and $u$
is a fresh variable. Since $lgt((\mu y N_1 \,
P[\sigma]))<lgt(\sigma)$, the result follows from the the $IH$.
\qed
\end{itemize}
\end{itemize}
\end{proof}

\begin{theorem}\label{SN}
Every typed $\l\m$-term is strongly normalizable.
\end{theorem}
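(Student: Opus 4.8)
The plan is to prove the theorem by induction on the typing derivation of $M$, using Lemma \ref{principal} as the central tool. The key observation is that Lemma \ref{principal} already packages up the only nontrivial closure property we need: that substituting strongly normalizing terms (of uniform type) into a strongly normalizing term preserves strong normalization. So the proof amounts to checking that each typing rule builds an $SN$-term out of $SN$-subterms, possibly after a substitution of the kind covered by Lemma \ref{principal}.

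First I would set up the induction on $\G \v M : A$ and dispatch the trivial cases: the axiom rule gives $M = x$, which is obviously in $SN$; and the $\f_i$ rule gives $M = \l x P$ with $P \in SN$ by $IH$, so $M \in SN$ since $\l x P$ has no head-redex and its only argument (in the sense of $arg$) is $P$ — more simply, any infinite reduction of $\l x P$ would yield one of $P$. The $\bot_c$ rule, $M = \mu x P$ with $P \in SN$ by $IH$, is identical. The real content is the $\f_e$ case: $M = (P\;Q)$ with $\G_1 \v P : A \f B$ and $\G_2 \v N : A$, where by $IH$ both $P, Q \in SN$. Here a naive ``$arg$ plus $hred$'' argument via Lemma \ref{arg+head} does not close by itself, because reducing the head redex of $(P\;Q)$ when $P$ eventually becomes $\l x P'$ or $\mu x P'$ produces a substitution, and one needs strong normalization of that substituted term.

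The way to handle $\f_e$ is to prove, more generally and by a side induction (say on $\eta(P) + \eta(Q)$ together with $cxty$), that if $P, Q \in SN$ and $P$ has type $A \f B$ and $Q$ has type $A$, then $(P\;Q) \in SN$. Write $P = \overrightarrow{\l\m}(R\;\overrightarrow{O})$ using Lemma \ref{form}. If $R$ is a variable, then $(P\;Q)$ is in head normal form, $arg((P\;Q)) \subset \{$subterms of $P\} \cup \{Q\} \subset SN$, and $hred$ does nothing, so Lemma \ref{arg+head} gives the result directly. If $P$ itself has a head redex, then $hred((P\;Q)) = (hred(P)\;Q)$ with $\eta(hred(P)) < \eta(P)$, and the side $IH$ applies; combined with $arg \subset SN$ (which uses the main $IH$ on the structure, or just that proper subterms of $P$ and $Q$ are in $SN$) and Lemma \ref{arg+head}, we are done — \emph{except} when the head of $P$ is an abstraction sitting directly under the application, i.e. $P = \l x P'$ (so $(P\;Q) = (\l x P'\;Q)$, $hred = P'[x:=Q]$) or $P = \mu x P'$ (so $(P\;Q) = (\mu x P'\;Q)$, $hred = \mu z\, P'[x := \l w(z\;(w\;Q))]$). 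In the first subcase, $P'$ is a proper subterm of $P$ hence in $SN$, $Q \in SN$, and $x$ has type $A$, so Lemma \ref{principal} gives $P'[x:=Q] \in SN$. In the second subcase, $(\mu x P'\;z) \in SN$ by Lemma \ref{mu} applied to $\mu x P' \in SN$ — and then one substitutes; here one must be slightly careful, but this is exactly the pattern already used in the last case of the proof of Lemma \ref{principal}, so the same bookkeeping with a fresh variable and Lemma \ref{principal} works.

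I expect the main obstacle to be precisely this $\f_e$ case, and within it the need to set up a clean side induction whose measure decreases in all of: reducing the head redex of $P$ (drop in $\eta(P)$), reducing inside $Q$, and the ``administrative'' $\mu$-redex unfolding (handled by Lemma \ref{mu} plus Lemma \ref{principal} on a strictly smaller type). Everything else is routine. In fact the cleanest packaging is: prove by induction on the type $A$ that for all $P,Q \in SN$ with $P : A \f B$, $Q : A$, one has $(P\;Q) \in SN$ — then the subcase $P = \l x P'$ invokes Lemma \ref{principal} with $lgt$ strictly smaller, matching the measure that Lemma \ref{principal} itself is built on, and the theorem follows by a trivial structural induction on the derivation.
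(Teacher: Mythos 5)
Your proof is correct and it leans on the same central tool as the paper (Lemma \ref{principal}), but you execute the application case in a far more laborious way than the paper does. The paper disposes of $M=(N_1\;N_2)$ in one line: take a fresh variable $x$ of type $A\f B$ and write $M=(x\;N_2)[x:=N_1]$; the term $(x\;N_2)$ is in $SN$ because its head is a variable and $N_2\in SN$ by the induction hypothesis (Lemma \ref{arg+head}), and then Lemma \ref{principal}, applied to the one-variable substitution $[x:=N_1]$ with $N_1\in SN$, gives $M\in SN$ at once. This is exactly the fresh-variable trick you yourself use in your $\mu$-subcase, where $(\mu x P'\;Q)=(\mu x P'\;z)[z:=Q]$ is handled via Lemma \ref{mu} and Lemma \ref{principal}; applied one level up, it makes the whole head-reduction analysis (Lemma \ref{form}, Lemma \ref{arg+head}), the side induction on $\eta(P)+\eta(Q)$, and the implicit appeal to subject reduction for $hred(P)$ unnecessary. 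Likewise, your closing suggestion to run the auxiliary claim by induction on the type $A$ is not needed: Lemma \ref{principal} is already fully proved and can be invoked as a black box, so no $lgt$ bookkeeping has to reappear at the level of the theorem; the only induction required is the structural one on $M$ (your induction on the typing derivation amounts to the same thing). In short, your route works, but it re-proves inside the theorem a small piece of the machinery that Lemma \ref{principal} was designed to encapsulate, whereas the paper's proof (and its Remark 1, which gives the symmetric decomposition $(N_1\;x)[x:=N_2]$) shows the case closes with no case analysis at all.
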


\begin{proof}  By induction on $cxty(M)$. The cases
$M=x$, $M=\lambda x\;N$ or $M=\mu x\;N$ are trivial. If
$M=(N_1\;N_2)$ the result follows from lemma \ref{b} and the $IH$
since $M=(x\;N_2)[x:=N_1]$ where $x$ is a fresh variable.
\end{proof}

\noindent{\bf  Remarks}
\begin{enumerate}
\item In the proof of theorem \ref{SN}, the case $M = (N_1\;N_2)$ can also be solved,
  by writing $M=(N_1 \; x)[x:=N_2]$ where $x$ is a fresh variable
 and using lemma \ref{arg+head}.
\item In the proof of lemma \ref{mu}, the case $M =
  (x\;P\;\overrightarrow{O})$ and $N = \l y N_{1}$ can be solved exactly as the case $N = \m y N_{1}$
   by using  $M_1 = (\l y N_1 \, z)$ where $z$ is a fresh
variable.
\end{enumerate}


\begin{thebibliography}{99}

\bibitem{deG2} P. de Groote. {\em Strong normalization of classical
natural deduction with disjunction.}  TLCA'01  in Lecture Notes in
Computer Science (2044), pp. 182-196. Springer Verlag, 2001.

\bibitem{dav1} R. David. {\em Normalization without
reducibility}. Annals of Pure and Applied Logic (107), p. 121-130, 2001.

\bibitem{LoMa} F. Joachimski and R. Matthes. {\em Short proofs of normalization
for the simply-typed lambda-calculus, permutative conversions and
G\"odel's T} in Arch. Math. Logic 42, p 59-87 (2003).

\bibitem{jjl} J.J. Levy. {\em Réductions correctes et optimales
dans le lambda-calcul}. PhD thesis Paris 7, 1978.

\bibitem{Par1} M. Parigot {\em $\lambda \mu$-calculus: An algorithm
interpretation of classical natural deduction.}
 Lecture Notes in Artificial Intelligence (624),
pp. 190-201. Springer Verlag 1992.

\bibitem{Par2} M. Parigot. {\em Proofs of strong normalization for second
order classical natural deduction.} Journal of Symbolic Logic, 62 (4), pp.
1461-1479, 1997.

\bibitem{ReSo} N.J. Rehof and M.H. Sorensen. {\em The
$\lambda_{\Delta}$-calculus.}  TACS'94  in Lecture Notes in
Computer Science (789), pp. 516-542. Springer Verlag, 1994.

\bibitem{vda} D. van Daalen {\em The language theory of Automath}.
PhD Thesis. Eindhoven 1977.

\end{thebibliography}
\end{document}